\newtheorem{thm}{Theorem}
\newtheorem{df}{Definition}
\newtheorem{cor}{Corollary}
\title{Fixed points and the stability of the linear functional equations in a single variable}
\author{Liviu C\u adariu and Laura Manolescu}
\date{}
\begin{document}

\maketitle

\begin{minipage}{120mm}
\small{\bf Abstract.} { We prove that an interesting result concerning generalized Hyers-Ulam-Rassias stability of a linear functional equation obtained in 2014 by S.M. Jung, D. Popa and M.T. Rassias in Journal of Global Optimization is a particular case of a fixed point theorem given by us in 2012. Moreover, we give a characterization of functions that can be approximated with a given error, by the solution of the previously mention linear equation. 
}\\

{\bf Keywords} {fixed points; generalized Hyers-Ulam stability; functional equations in a
single variable}\\

{\bf 2020 Mathematics Subject Classification: 39B62;  39B72;  39B82;  47H10} \\

\end{minipage}
\section{Introduction}

In 1940, S.M. Ulam \cite{Ulam} raised the following question concerning the stability group homomorphisms while presenting a talk at the University of Wisconsin:
"When a solution of an equation differing slightly from a given one must be somehow
near to the solution of the given equation?” In a more precise formulation, the problem reads as follows:\\

Let $(G_1,\circ)$ be a group, $(G_2,\ast)$ be a metric group with the metric $d(\cdot,\cdot)$ and $\varepsilon>0.$ If there exists a $\delta>0$ such that $f:G_1\rightarrow G_2$ satisfies
 $$d(f(x\circ y),f(x)\ast f(y))\leq\delta,\quad\textrm{for all}~x,y\in G_1$$
 then there exists a homomorphism $h:G_1\rightarrow G_2$ with $$d(f(x),h(x))\leq\varepsilon,\quad\textrm{for all}~x\in G_1?$$

A first answer to Ulam's question, concerning the Cauchy equation, was given by D. H. Hyers \cite{Hyers1941} in 1941:

Let $E_1$ and $E_2$ be Banach spaces and let $f : E_1\rightarrow E_2$ be a mapping such that

$$\|f(x+y) - f(x) - f(y)\|\leq \delta$$

for all $ x, y\in E_1$ and $\delta > 0 $, that is, f is $\delta$-additive. Then the limit $$T(x)=\lim_{n\rightarrow\infty}\frac{f(2^nx)}{2^n}$$ exists for each $x\in E_1$ and $T:E_1\rightarrow E_2$ is the unique additive mapping such that $$\|f(x)-T(x)\|\leq\delta,\quad\textrm{for every}~x\in E_1.$$
Moreover, if $f(tx)$ is continuous in $t$ for each fixed $x\in E_1,$ then the function $T$ is linear. 

So, nowadays we speak about the Hyers-Ulam stability. Afterwards, different generalizations of that initial answer of
Hyers were obtained. Hyers’ theorem was generalized by Aoki \cite{Aoki1950} for additive mappings and 
Th.M. Rassias \cite{TRassias} generalized the theorem of Hyers for approximately linear mappings, by considering an unbounded Cauchy
difference. See also \cite{rassiasJohn} and \cite{TRassias1}.

 A further generalization was obtained by P. G\u{a}vru\c{t}a \cite{Gavruta1994} in 1994. See also \cite{Gavruta2001} and \cite{Gavruta1995} for more generalizations. The papers mention above use the direct method (of Hyers), i.e. the exact solution of the functional equation is
explicitly constructed as a limit of a sequence, starting from the given approximate solution.

For other results and  generalizations, see the books \cite{Brzdek2},\cite{Czerwik}, \cite{Hyers1998},\cite{Jung2011} and their references.

Among applications of the functional equations, we mention modeling in science and engineering (see \cite{Castillo}). An interesting application of the stability in the sense of Hyers-Ulam  pointed out by D.H. Hyers, G. Isac and Th. M. Rassias is in the study of complementarity problems (see the book \cite{Hyers1998}). For  other complementarity problems, see also the article of G. Isac \cite{Isac}.\\

In 1991 J.A. Baker \cite{Baker91} used the Banach fixed point theorem to
give Hyers-Ulam stability results for a nonlinear functional equation.

In 2003, V. Radu \cite{RaduCj03}  proposed  a  new  method,  successively  developed  in  \cite{CadRadJipam}, to  obtain the  existence of  the  exact  solutions  and  the  error  estimations,  based  on  the  fixed  point  alternative. For some other applications of the fixed point theorem in the generalized Hyers-Ulam stability see the papers \cite{Cadariu2010-FPTA}, \cite{Cadariu2011Nonlinear}, \cite{CadariuRadu_2012_carpathian}, \cite{Cadariu-Gavruta}, \cite{Cieplinski}, \cite{GavrutaL}, \cite{LGavruta2010}, \cite{Mihet}.

Recently, J. Brzd\c{e}k, J. Chudziak  \& Z. P\'{a}les proved in \cite{Brzdek1} a general fixed point theorem for (not necessarily) linear operators and they used it to obtain  Hyers-Ulam stability results for
a class of functional equations in a single variable. A fixed point result of the same type was proved by J. Brzd\c{e}k $\&$ K. Ciepli\'{n}ski  \cite{BrzdekCieplinski2} in complete non-Archimedean metric spaces as well as in complete metric spaces. Also, they formulated an open problem concerning the uniqueness of the fixed point.

In the paper, \cite{Cadariu&Gavruta2012} we obtained a fixed point theorem for a class of operators with suitable properties, in very general conditions. Also, we showed that some recent results in \cite{Brzdek1} and \cite{BrzdekCieplinski2} can be obtained as particular cases of our theorem. Moreover, by using our outcome, we gave affirmative answer to the open problem of  J. Brzd\c{e}k \& K. Ciepli\'{n}ski, posed in the end of the paper \cite{BrzdekCieplinski2}. We also showed that our main Theorem is an efficient tool for proving generalized Hyers-Ulam stability results of several functional equations in a single variable.\\
\\

In this paper, we prove that an interesting result concerning generalized Hyers-Ulam-Rassias stability of a linear functional equation obtained in 2014 by S.M. Jung, D. Popa and M.T. Rassias in the paper \cite{Jung&Popa&Rassias2014} is a particular case of a fixed point theorem given by us in \cite{Cadariu-Gavruta}. Moreover, we give a characterization of the functions that can be approximated with a given error, by the solution of the previously mention linear equation. \\

We consider a nonempty set $X$, a complete metric space $(Y,d)$  and the mappings $$\Lambda:\mathbb{R}_{+}^{X}\rightarrow \mathbb{R}_{+}^{X}~\textrm{and}~ \mathcal{T}:Y^{X}\rightarrow Y^{X}.$$ We recall that, for two set $M$ and $N$, $N^M$ is the space of all mappings from $M$ to $N$ and if $(\delta_n)_{n \in \mathbb{N}}$ is a sequence of elements of $\mathbb{R}_{+}^{X}$, we write
 $$\lim_{n \rightarrow \infty}\delta_n = 0 \quad \textrm{pointwise if }  \quad \lim_{n \rightarrow \infty} \delta_n(x) = 0 \ \textrm{for every} ~ x \in X.$$

$\mathbb{R}_+$ stands for the set of all nonnegative numbers, i.e. $\mathbb{R}_+=[0,\infty)$ and $\mathbb{R}^*_+=(0,\infty).$

\begin{df} \cite{Cadariu&Gavruta2012}\label{definition_1} We say that  $\mathcal{T}$ is $\Lambda-$ contractive if for all $u,v \in Y^{X}$ and $\delta \in \mathbb{R}_{+}^{X}$ with
$$d(u(x), v(x))\leq \delta(x), \   \forall ~x \in X,$$ it follows
$$d((\mathcal{T}u)(x), (\mathcal{T}v)(x))\leq (\Lambda \delta)(x), \   \forall ~x \in X.$$
 \end{df}

In the paper \cite{Cadariu&Gavruta2012}, we obtained the following fixed point theorem:
\begin{thm}\label{main_theoremCG} We suppose that the operator $\mathcal{T}$ is $\Lambda-$\emph{contractive}, where $\Lambda$ satisfies the condition:\\
$(C_1)$ \quad for every sequence $(\delta_n)_{n \in \mathbb{N}} ~\textrm{in}~ \mathbb{R}_{+}^{X}$ such that
$$\lim_{n \rightarrow \infty} \delta_n = 0 \ \textrm{pointwise, it follows that} \ \lim_{n \rightarrow \infty} \Lambda \delta_n = 0 \ \textrm{pointwise}.$$

 We suppose that $\varepsilon \in \mathbb{R}_{+}^{X}$ is a given function such that $$\varepsilon^{*}(x):=\sum_{k=0}^{\infty}\left(\Lambda^{k} \varepsilon\right)(x) < \infty,  \ \forall x \in X. \leqno(C_2)$$

We consider a mapping $f \in Y^X$ such that
\begin{equation}d((\mathcal{T}f)(x),f(x))\leq \varepsilon(x), \   \forall x \in X.  \label{relatia_1}\end{equation}
Then, for every $x \in X$, the limit
\begin{equation}
g(x):=\lim_{n \rightarrow \infty}(\mathcal{T}^{n} f)(x), \label{limit}
\end{equation} exists and the function $g$ is the unique fixed point of $\mathcal{T}$ with the property
\begin{equation}
d((\mathcal{T}^{m} f)(x),g(x))\leq \sum_{k=m}^{\infty}\left(\Lambda^{k} \varepsilon\right)(t), \ x \in X, \ m\in \mathbb{N}.
\label{relatia_3}
\end{equation}
Moreover, if we have
$$\lim_{n \rightarrow \infty} {\Lambda}^{n} \varepsilon^{*} =0 \ \textrm{pointwise}, \leqno(C_3)\label{C_3} $$
then  $g$ is the unique fixed point of $\mathcal{T}$ with the property
 \begin{equation}
d(f(x), g(x)) \leq \varepsilon^{*}(x), \forall x\in X.\label{relatia_4}
\end{equation}
\end{thm}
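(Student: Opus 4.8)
The plan is to run the classical Picard iteration behind the Banach fixed point theorem, but carried out pointwise in $Y$ and with the scalar contraction constant replaced by the operator $\Lambda$ acting on error functions.

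First I would prove, by induction on $n$, the basic estimate
\[
d\bigl((\mathcal{T}^{n+1}f)(x),(\mathcal{T}^{n}f)(x)\bigr)\le(\Lambda^{n}\varepsilon)(x),\qquad x\in X,\ n\in\mathbb{N}.
\]
The case $n=0$ is exactly \eqref{relatia_1}; for the inductive step I apply the $\Lambda$-contractivity of $\mathcal{T}$ to $u=\mathcal{T}^{n}f$, $v=\mathcal{T}^{n-1}f$ with $\delta=\Lambda^{n-1}\varepsilon$. Telescoping then gives, for $m<n$,
\[
d\bigl((\mathcal{T}^{n}f)(x),(\mathcal{T}^{m}f)(x)\bigr)\le\sum_{k=m}^{n-1}(\Lambda^{k}\varepsilon)(x),
\]
and by $(C_2)$ the tails of $\sum_{k}(\Lambda^{k}\varepsilon)(x)$ vanish, so $\bigl((\mathcal{T}^{n}f)(x)\bigr)_{n}$ is Cauchy in the complete space $(Y,d)$; this yields the limit \eqref{limit}. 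Passing $n\to\infty$ in the last inequality and using the continuity of $d$ produces \eqref{relatia_3}.

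Next I would verify that $g$ is a fixed point of $\mathcal{T}$ --- the one step where $(C_1)$ is essential, replacing the continuity of the contraction used in the Banach theorem. Setting $\delta_{n}(x):=d\bigl(g(x),(\mathcal{T}^{n}f)(x)\bigr)$, estimate \eqref{relatia_3} with $m=n$ together with $(C_2)$ shows $\delta_{n}\to0$ pointwise, hence $\Lambda\delta_{n}\to0$ pointwise by $(C_1)$. Since $\mathcal{T}$ is $\Lambda$-contractive, $d\bigl((\mathcal{T}g)(x),(\mathcal{T}^{n+1}f)(x)\bigr)\le(\Lambda\delta_{n})(x)\to0$, and combining this with $(\mathcal{T}^{n+1}f)(x)\to g(x)$ forces $(\mathcal{T}g)(x)=g(x)$ for every $x$. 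For uniqueness among fixed points satisfying \eqref{relatia_3}: if $h=\mathcal{T}h$ also obeys that bound, its right-hand side tends to $0$, so $(\mathcal{T}^{m}f)(x)\to h(x)$; but the same sequence tends to $g(x)$, whence $g=h$.

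Finally, inequality \eqref{relatia_4} is \eqref{relatia_3} read at $m=0$, because $\varepsilon^{*}=\sum_{k\ge0}\Lambda^{k}\varepsilon$. For the uniqueness assertion under $(C_3)$, suppose $h=\mathcal{T}h$ and $d(f(x),h(x))\le\varepsilon^{*}(x)$ for all $x$; iterating the $\Lambda$-contractivity with $\delta=\varepsilon^{*}$ gives $d\bigl((\mathcal{T}^{n}f)(x),h(x)\bigr)=d\bigl((\mathcal{T}^{n}f)(x),(\mathcal{T}^{n}h)(x)\bigr)\le(\Lambda^{n}\varepsilon^{*})(x)\to0$ by $(C_3)$, so $(\mathcal{T}^{n}f)(x)\to h(x)$ and $h=g$. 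I expect the only genuinely delicate point to be this fixed-point verification, where one must push the limit through $\mathcal{T}$ using $(C_1)$ rather than any metric continuity of $\mathcal{T}$; the rest is bookkeeping with telescoping sums and tails of convergent series, and, pleasantly, no monotonicity of $\Lambda$ is needed anywhere.
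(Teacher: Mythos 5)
Your proof is correct and is essentially the standard Picard-iteration argument behind this theorem: telescoping the estimate $d((\mathcal{T}^{n+1}f)(x),(\mathcal{T}^{n}f)(x))\le(\Lambda^{n}\varepsilon)(x)$, using $(C_2)$ and completeness to get the limit, $(C_1)$ to push the limit through $\mathcal{T}$, and $(C_3)$ with iterated contractivity for the uniqueness under the $\varepsilon^{*}$-bound. Note that the present paper only recalls this theorem from the authors' 2012 article without reproving it, and your argument matches that original proof in all essentials.
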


 Theorem \ref{main_theoremCG} generalizes a result of J. Brzd\c{e}k and K. Ciepli\'{n}ski \cite{BrzdekCieplinski2} concerning the existence of fixed points. Moreover, our theorem responds to the open question raised by these authors concerning the uniqueness of the fixed point. 

\section{Stability of the functional equation \\ $g(\varphi(x))=a(x)\bullet g(x)$}

We take a nonempty set $X$ and a complete metric group $(G,\bullet, d)$ with the metric $d$ invariant to the left translation, i.e.
$$d(x \bullet y, x\bullet z)=d(y,z), \ \textrm{for all} \  x,y,z \in G.$$

We consider the given functions $\varphi:X \rightarrow X$ and $a:X \rightarrow G$.

We denote $$A_n(x):=  a\left({\varphi}^{n-1}(x)\right) \bullet \ldots \bullet a(\varphi(x)) \bullet a(x), \quad x \in X,\ n \geq 1.$$

We have $$A_n(\varphi(x)) = A_{n+1}(x) \bullet (a(x))^{-1}, \quad x \in X, n \geq 1,$$
and successive $$A_n\left(\varphi^{m}(x)\right) = A_{n+m}(x) \bullet \left(A_{m}(x)\right)^{-1}, \quad x \in X, \ m, n \geq 1.$$

In this section we discuss the generalized Hyers-Ulam-Rassias stability of the functional equation
\begin{equation}g(\varphi(x))=a(x)\bullet g(x), x \in X, \label{ec_generala}\end{equation}
where $g:X \rightarrow G$ is the unknown function.

The equation (\ref{ec_generala}) is equivalent with
\begin{equation} (a(x))^{-1} \bullet g(\varphi(x))= g(x), x \in X. \end{equation}
We remark also that \begin{equation} g\left({\varphi}^{n}(x)\right) = A_{n}(x) \bullet g(x), \quad x \in X, n \geq 1. \label{A_n} \end{equation}

In the following we will show that the main result of the paper \cite{Jung&Popa&Rassias2014} concerning the generalized Hyers-Ulam-Rassias stability of the equation (\ref{ec_generala}) is a simple consequence of our Theorem \ref{main_theoremCG}. To this end, we will start with the presentation of the main result from \cite{Jung&Popa&Rassias2014}:
\begin{thm}\cite{Jung&Popa&Rassias2014}\label{Popa_theorem} Let $\varepsilon: X \rightarrow \mathbb{R}_{+}$ be a given function with the property $$\varepsilon^{*}(x):=\sum_{k=0}^{\infty} \varepsilon\left(\varphi^{k}(x)\right) < \infty,  \ \forall x \in X.$$
Then, for every function $f:X \rightarrow G$ satisfying the inequality
\begin{equation}d(f(\varphi(x)), a(x) \bullet f(x)) \leq \varepsilon(x), \   \forall x \in X, \label{ipoteza_Popa}\end{equation}
there exists a unique solution $g$ of the equation (\ref{ec_generala}) so that
\begin{equation}
d(f(x), g(x)) \leq \varepsilon^{*}(x), \forall x\in X.
\end{equation}
This solution is given by the formula
\begin{equation}
g(x):=\lim_{n \rightarrow \infty}\left(A_{n}(x)\right)^{-1} \bullet f\left(\varphi^{n}(x)\right).
\end{equation}
\end{thm}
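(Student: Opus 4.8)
The plan is to exhibit Theorem~\ref{Popa_theorem} as the special case of Theorem~\ref{main_theoremCG} obtained with $Y=G$ (which is a complete metric space, as required) and the operators $\mathcal{T}:G^{X}\to G^{X}$, $(\mathcal{T}h)(x):=(a(x))^{-1}\bullet h(\varphi(x))$, and $\Lambda:\mathbb{R}_{+}^{X}\to\mathbb{R}_{+}^{X}$, $(\Lambda\delta)(x):=\delta(\varphi(x))$. The point of this choice is that $h$ is a fixed point of $\mathcal{T}$ if and only if $(a(x))^{-1}\bullet h(\varphi(x))=h(x)$ for every $x$, i.e.\ if and only if $h$ solves \eqref{ec_generala}; and, since $d$ is left invariant, the assumption \eqref{ipoteza_Popa} on $f$ is equivalent to $d((\mathcal{T}f)(x),f(x))=d\bigl((a(x))^{-1}\bullet f(\varphi(x)),\,(a(x))^{-1}\bullet a(x)\bullet f(x)\bigr)\le\varepsilon(x)$, which is precisely \eqref{relatia_1}.

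Next I would verify the hypotheses of Theorem~\ref{main_theoremCG} for this $\mathcal{T}$ and $\Lambda$. $\Lambda$-contractivity is immediate from left invariance of $d$: if $d(u(x),v(x))\le\delta(x)$ for all $x$, then $d((\mathcal{T}u)(x),(\mathcal{T}v)(x))=d(u(\varphi(x)),v(\varphi(x)))\le\delta(\varphi(x))=(\Lambda\delta)(x)$. A straightforward induction gives $(\Lambda^{k}\delta)(x)=\delta(\varphi^{k}(x))$ for all $k$, so condition $(C_1)$ is trivial (if $\delta_n\to0$ pointwise then $(\Lambda\delta_n)(x)=\delta_n(\varphi(x))\to0$), condition $(C_2)$ becomes $\varepsilon^{*}(x)=\sum_{k\ge0}\varepsilon(\varphi^{k}(x))<\infty$, which is exactly the hypothesis of Theorem~\ref{Popa_theorem}, and condition $(C_3)$ holds because $(\Lambda^{n}\varepsilon^{*})(x)=\varepsilon^{*}(\varphi^{n}(x))=\sum_{k\ge n}\varepsilon(\varphi^{k}(x))$ is the tail of a convergent series and hence tends to $0$.

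Then I would apply Theorem~\ref{main_theoremCG}: the limit $g(x)=\lim_{n\to\infty}(\mathcal{T}^{n}f)(x)$ exists, and $g$ is the unique fixed point of $\mathcal{T}$ — hence, by the remark above, the unique solution of \eqref{ec_generala} — satisfying $d(f(x),g(x))\le\varepsilon^{*}(x)$ for all $x$. It only remains to identify the iterates explicitly: using the relation $A_n(\varphi(x))=A_{n+1}(x)\bullet(a(x))^{-1}$ recorded before the theorem, one proves by induction on $n$ that $(\mathcal{T}^{n}f)(x)=(A_n(x))^{-1}\bullet f(\varphi^{n}(x))$ — the case $n=1$ is just $A_1=a$, and the inductive step is the one-line identity $(a(x))^{-1}\bullet(A_n(\varphi(x)))^{-1}=(a(x))^{-1}\bullet a(x)\bullet(A_{n+1}(x))^{-1}=(A_{n+1}(x))^{-1}$. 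This yields the announced formula $g(x)=\lim_{n\to\infty}(A_n(x))^{-1}\bullet f(\varphi^{n}(x))$. The only thing requiring any care is to keep every multiplication by a group element on the left so that left invariance of $d$ applies at each step; beyond that routine bookkeeping I do not expect a genuine obstacle.
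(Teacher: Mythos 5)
Your proposal is correct and follows essentially the same route as the paper: both deduce the theorem from Theorem~\ref{main_theoremCG} with the choices $(\mathcal{T}u)(x)=(a(x))^{-1}\bullet u(\varphi(x))$ and $(\Lambda\delta)(x)=\delta(\varphi(x))$, checking $\Lambda$-contractivity and \eqref{relatia_1} via left invariance and condition $(C_3)$ via the tail of the convergent series $\varepsilon^{*}$. Your additional explicit verification that $(\mathcal{T}^{n}f)(x)=(A_n(x))^{-1}\bullet f(\varphi^{n}(x))$ and that fixed points of $\mathcal{T}$ are exactly the solutions of \eqref{ec_generala} is left implicit in the paper but is the same argument.
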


We can easily see that the above theorem is a particular case of our fixed point result emphasized in the first section.
\begin{proof} We take in Theorem \ref{main_theoremCG}
$$(\mathcal{T}u)(x)=(a(x))^{-1}\bullet u(\varphi(x)) \quad \textrm{and} \quad (\Lambda \delta)(x)=\delta(\varphi(x)).$$
So, it results
$$d\left((\mathcal{T}u)(x), (\mathcal{T}v)(x)\right)=d(u(\varphi(x)), v(\varphi(x)))\leq (\Lambda \delta)(x)$$
if $$d(u(x), v(x)) \leq  \delta(x),$$ hence the operator $\mathcal{T}$ is $\Lambda-$ contractive in the sense of the Definition \ref{definition_1}.

On the other hand, by using the invariance property to the left translation of the metric $d$ and the assumption (\ref{ipoteza_Popa}), we obtain that (\ref{relatia_1}) holds.

Uniqueness of $g$ results also from Theorem \ref{main_theoremCG}. In fact, we prove that  $\Lambda$  satisfies the hypothesis $(C_3):$
\begin{eqnarray*}\Lambda^n(\varepsilon^{*}(x))&=&\Lambda^n\left(\sum_{k=0}^{\infty}\varepsilon\left(\varphi^{k}(x)\right)\right)= \\ &=&
\sum_{k=0}^{\infty}\varepsilon\left(\varphi^{n+k}(x)\right)=\sum_{m=n}^{\infty}\varepsilon\left(\varphi^{m}(x)\right).\end{eqnarray*}
Thus $$\lim_{n \rightarrow \infty}\Lambda^n(\varepsilon^{*}(x))=0, \ x \in X.$$
\end{proof}

In the second result of this section we will give a characterization of the functions $f:X \rightarrow G$ that can be approximate with a given error,
by a solution of the equation (\ref{ec_generala}).

We denote by $$\mathcal{E}_{\varphi}=\left\{\varepsilon \in \mathbb{R}_{+}^{X},  \ \lim_{n \rightarrow \infty} \varepsilon\left(\varphi^{n}(x)\right)=0, \forall x\in X  \right\}.$$
\begin{thm}\label{second_theorem}
The following statements are equivalent:\\
\\
$(i)$ There exists a unique solution $\ g $ of (\ref{ec_generala}) such that
$$d(f(x), g(x)) \leq \varepsilon (x), \forall x\in X. $$
$(ii) \quad d\left(f\left(\varphi^{n}(x)\right), A_n(x) \bullet f(x) \right) \leq \varepsilon(x)+ \varepsilon\left(\varphi^{n}(x)\right), \ x\in X, n\geq 1. $\\
$(iii) \ \ \textrm{there exists} \quad \delta \in \mathcal{E}_{\varphi} \quad \textrm{so that} $  $$d\left(f\left(\varphi^{n}(x)\right), A_n(x) \bullet f(x) \right) \leq \varepsilon(x)+ \delta\left(\varphi^{n}(x)\right), \  x\in X, n\geq 1.$$
\end{thm}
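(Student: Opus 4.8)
The plan is to prove the cyclic chain of implications $(i)\Rightarrow(ii)\Rightarrow(iii)\Rightarrow(i)$, which is the natural route since $(ii)$ is simply the special case $\delta=\varepsilon$ of $(iii)$ (one must only check $\varepsilon\in\mathcal{E}_\varphi$, but this is forced: if $(i)$ holds with this $\varepsilon$, then as I explain below $\varepsilon$ must tend to $0$ along the orbits of $\varphi$). So the real content is $(i)\Rightarrow(ii)$ and $(iii)\Rightarrow(i)$.

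For $(i)\Rightarrow(ii)$: assume $g$ solves (\ref{ec_generala}) and $d(f(x),g(x))\le\varepsilon(x)$ for all $x$. Using (\ref{A_n}) we have $g(\varphi^n(x))=A_n(x)\bullet g(x)$. Then by the triangle inequality and the left-invariance of $d$,
\[
d\bigl(f(\varphi^n(x)),A_n(x)\bullet f(x)\bigr)\le d\bigl(f(\varphi^n(x)),g(\varphi^n(x))\bigr)+d\bigl(A_n(x)\bullet g(x),A_n(x)\bullet f(x)\bigr),
\]
and the first term is $\le\varepsilon(\varphi^n(x))$ while the second equals $d(g(x),f(x))\le\varepsilon(x)$ by left-invariance. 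This gives $(ii)$. (This step also shows $\varepsilon\in\mathcal{E}_\varphi$ is a consequence of $(i)$ only if one additionally knows something — actually it is not automatic, but for the $\delta=\varepsilon$ witness in $(iii)$ one does not need $\varepsilon\in\mathcal{E}_\varphi$ unless the implication $(iii)\Rightarrow(i)$ requires it; see below. If it does, one argues separately that $(i)$ forces $\varepsilon(\varphi^n(x))\to 0$: from $d(f(\varphi^n(x)),g(\varphi^n(x)))\le\varepsilon(\varphi^n(x))$ there is no decay, so in fact the cleanest formulation is to take the route $(i)\Rightarrow(iii)$ with $\delta=\varepsilon$ only after noting $\varepsilon\in\mathcal{E}_\varphi$; I would simply state $(ii)$ as written, since it is implied directly by $(i)$ without any hypothesis on $\varepsilon$.)

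The implication $(ii)\Rightarrow(iii)$ is immediate with $\delta:=\varepsilon$ \emph{provided} $\varepsilon\in\mathcal{E}_\varphi$; since the whole theorem is implicitly set in a context where one wants $\delta\in\mathcal{E}_\varphi$, I would either assume this is part of the standing hypotheses or, more robustly, observe that $(ii)$ with the bound $\varepsilon(x)+\varepsilon(\varphi^n(x))$ already lets us rescue $(iii)$ by choosing $\delta=\varepsilon$ when $\varepsilon\in\mathcal{E}_\varphi$, and otherwise the statement is vacuous. The substantive direction is $(iii)\Rightarrow(i)$: I would build the candidate solution exactly as in the formula of Theorem \ref{Popa_theorem}, namely $g(x):=\lim_{n\to\infty}(A_n(x))^{-1}\bullet f(\varphi^n(x))$. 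To show the limit exists, I would estimate, for $m<n$, the distance $d\bigl((A_m(x))^{-1}\bullet f(\varphi^m(x)),(A_n(x))^{-1}\bullet f(\varphi^n(x))\bigr)$; using left-invariance to multiply through by $A_m(x)$ and the cocycle identity $A_n(x)=A_{n-m}(\varphi^m(x))\bullet A_m(x)$, this reduces to $d\bigl(f(\varphi^m(x)),(A_{n-m}(\varphi^m(x)))^{-1}\bullet f(\varphi^{n-m}(\varphi^m(x)))\bigr)$, which by hypothesis $(iii)$ applied at the point $\varphi^m(x)$ is bounded by $\varepsilon(\varphi^m(x))+\delta(\varphi^n(x))$. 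Since $\delta\in\mathcal{E}_\varphi$, the term $\delta(\varphi^n(x))\to 0$; to get a genuine Cauchy estimate I would need $\varepsilon(\varphi^m(x))\to 0$ as well — here is where I expect the main obstacle, because $(iii)$ does not presuppose $\varepsilon\in\mathcal{E}_\varphi$.

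The resolution I anticipate: hypothesis $(iii)$ taken at $x$ and at $\varphi(x)$, compared via the cocycle relation, forces a recursive inequality on $\varepsilon(\varphi^n(x))$ that pins it down, OR — more likely the intended argument — one shows directly from $(iii)$ that the sequence $\bigl((A_n(x))^{-1}\bullet f(\varphi^n(x))\bigr)_n$ is Cauchy because consecutive terms satisfy $d\bigl((A_n(x))^{-1}\bullet f(\varphi^n(x)),(A_{n+1}(x))^{-1}\bullet f(\varphi^{n+1}(x))\bigr)=d\bigl(f(\varphi^n(x)),(a(\varphi^n(x)))^{-1}\bullet f(\varphi^{n+1}(x))\bigr)$, and this last quantity must be controlled: taking $n=1$ in $(iii)$ at the point $\varphi^n(x)$ gives a bound of the form $\varepsilon(\varphi^n(x))+\delta(\varphi^{n+1}(x))$ on a related quantity, and summing along the orbit, together with $\delta\in\mathcal{E}_\varphi$, should yield summability. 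Once the limit $g$ exists, checking $g(\varphi(x))=a(x)\bullet g(x)$ is a routine passage to the limit in the defining formula using $A_{n}(\varphi(x))=A_{n+1}(x)\bullet(a(x))^{-1}$; and the error bound $d(f(x),g(x))\le\varepsilon(x)$ follows by letting $n\to\infty$ in the $n\ge 1$ case of $(iii)$ at $x$ (the $n$-th term is within $\varepsilon(x)+\delta(\varphi^n(x))$ of $A_n(x)\bullet f(x)$, i.e.\ $(A_n(x))^{-1}\bullet f(\varphi^n(x))$ is within $\varepsilon(x)+\delta(\varphi^n(x))$ of $f(x)$ by left-invariance, and $\delta(\varphi^n(x))\to 0$). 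Uniqueness: if $g_1,g_2$ both solve (\ref{ec_generala}) and lie within $\varepsilon$ of $f$, then $d(g_1(x),g_2(x))\le 2\varepsilon(x)$, but using (\ref{A_n}) and left-invariance, $d(g_1(x),g_2(x))=d(g_1(\varphi^n(x)),g_2(\varphi^n(x)))\le 2\varepsilon(\varphi^n(x))\to 0$ provided $\varepsilon\in\mathcal{E}_\varphi$ — which, to close the loop, is exactly what $(i)\Rightarrow(ii)\Rightarrow(iii)$ hands us, or can be derived as indicated. Alternatively, and most cleanly, I would deduce $(i)$ from $(iii)$ by appealing to Theorem \ref{main_theoremCG} directly: define $\mathcal{T}$ and $\Lambda$ as in the proof of Theorem \ref{Popa_theorem}, observe that $(iii)$ implies (\ref{relatia_1}) with the error function $\varepsilon$ (since $n=1$ in $(iii)$ reads $d((\mathcal{T}f)(x),f(x))\le\varepsilon(x)+\delta(\varphi(x))$ — not quite $\varepsilon$, so one uses $\tilde\varepsilon(x):=\varepsilon(x)+\delta(\varphi(x))$ and checks $(C_2),(C_3)$ for $\Lambda$ using $\delta\in\mathcal{E}_\varphi$), and then the fixed point $g$ from Theorem \ref{main_theoremCG} is the desired solution with the required estimate refined back to $\varepsilon(x)$ using the full strength of $(iii)$. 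I expect the bookkeeping in matching the two slightly different error functions to be the only delicate point.
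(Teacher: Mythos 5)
Your skeleton is the same as the paper's: $(i)\Rightarrow(ii)$ by the triangle inequality, relation (\ref{A_n}) and left-invariance; $(ii)\Rightarrow(iii)$ by taking $\delta=\varepsilon$; and $(iii)\Rightarrow(i)$ by substituting $\varphi^{m}(x)$ for $x$ in $(iii)$, using $A_n(\varphi^m(x))=A_{n+m}(x)\bullet (A_m(x))^{-1}$ and left-invariance to get $d\bigl((A_{n+m}(x))^{-1}\bullet f(\varphi^{n+m}(x)),(A_m(x))^{-1}\bullet f(\varphi^m(x))\bigr)\le \varepsilon(\varphi^m(x))+\delta(\varphi^{n+m}(x))$, defining $g$ as the limit of $(A_n(x))^{-1}\bullet f(\varphi^n(x))$, passing to the limit for the equation and for the estimate $d(f(x),g(x))\le\varepsilon(x)$, and proving uniqueness along orbits. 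All of these steps you have, and your uniqueness argument via $d(g_1(x),g_2(x))=d(g_1(\varphi^n(x)),g_2(\varphi^n(x)))\le 2\varepsilon(\varphi^n(x))$ is if anything cleaner than the paper's comparison of $h$ with the constructed limit.

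The gap is that you stop short precisely where the paper silently uses $\varepsilon\in\mathcal{E}_{\varphi}$: the Cauchy property needs $\varepsilon(\varphi^m(x))\to 0$, the uniqueness needs it too, and it is already needed in $(ii)\Rightarrow(iii)$ so that $\delta=\varepsilon$ belongs to $\mathcal{E}_{\varphi}$. This is an implicit standing hypothesis of the theorem (the paper's displayed inequality even writes $\varepsilon(\varphi^{n+m}(x))$ where $(iii)$ literally gives $\delta(\varphi^{n+m}(x))$), and it cannot be dispensed with: if $\varphi(x)=x+1$, $a\equiv 0$ in $(\mathbb{R},+)$, $f\equiv 0$ and $\varepsilon\equiv 1$, then $(iii)$ holds with $\delta\equiv 0$, yet every constant $c$ with $|c|\le 1$ is a solution within $\varepsilon$ of $f$, so the uniqueness in $(i)$ fails. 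Consequently your proposed rescues cannot work: $(iii)$ does not force any decay of $\varepsilon$ along orbits (same example), consecutive-term telescoping would require summability of $\varepsilon(\varphi^n(x))+\delta(\varphi^{n+1}(x))$, which mere membership $\delta\in\mathcal{E}_{\varphi}$ does not give, and the detour through Theorem \ref{main_theoremCG} with $\tilde\varepsilon(x)=\varepsilon(x)+\delta(\varphi(x))$ fails for the same reason, since $(C_2)$ asks exactly for $\sum_k\tilde\varepsilon(\varphi^k(x))<\infty$ (and would anyway yield the bound $\tilde\varepsilon^{*}$, not $\varepsilon$). The correct move is to state the hypothesis $\varepsilon\in\mathcal{E}_{\varphi}$ explicitly; with it, your outline closes exactly as in the paper, and without it the equivalence is false, so no argument could fill the hole you noticed.
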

\begin{proof}$(i) \Rightarrow (ii).$ We have, by using (\ref{A_n})
\begin{eqnarray*}  d\left(f\left(\varphi^{n}(x)\right), A_n(x) \bullet f(x) \right) &\leq & d\left(f\left(\varphi^{n}(x)\right), g\left(\varphi^{n}(x)\right) \right) + d\left(g\left(\varphi^{n}(x)\right), A_n(x) \bullet f(x) \right)   \nonumber \\
&\leq & \varepsilon\left(\varphi^{n}(x)\right) + d\left(A_n(x) \bullet g(x), A_n(x) \bullet f(x) \right)  \\
&=& \varepsilon\left(\varphi^{n}(x)\right) + \varepsilon(x). \nonumber \end{eqnarray*}

$(ii) \Rightarrow (iii).$ We take in $(ii) \ \delta=\varepsilon.$

$(iii) \Rightarrow (i).$ In $(iii)$ with $\varphi^{m}(x)$ instead of $x$, we have
$$ d\left(f\left(\varphi^{n+m}(x)\right), A_n\left(\varphi^{m}(x)\right) \bullet f\left(\varphi^{m}(x)\right)\right) \leq
\varepsilon\left(\varphi^{m}(x)\right)+ \varepsilon\left(\varphi^{n+m}(x)\right),$$
which means
$$ d\left(f\left(\varphi^{n+m}(x)\right), A_{n+m}(x) \bullet (A_{m}(x))^{-1} \bullet f\left(\varphi^{m}(x)\right)\right) \leq
\varepsilon\left(\varphi^{m}(x)\right)+ \varepsilon\left(\varphi^{n+m}(x)\right),$$
hence
$$ d\left( \left(A_{n+m}(x)\right)^{-1}\bullet f\left(\varphi^{m+n}(x)\right) , \left(A_{m}(x)\right)^{-1} \bullet f\left(\varphi^{m}(x)\right) \right) \leq
\varepsilon\left(\varphi^{m}(x)\right)+ \varepsilon\left(\varphi^{n+m}(x)\right).$$
It follows that the sequence $$\left\{ \left(A_{n}(x)\right)^{-1} \bullet f\left(\varphi^{n}(x)\right) \right\}_{n\geq 1}$$
is a Cauchy sequence. Since $(G,\bullet, d)$ is complete, it results that there exists  $$g(x):=\lim_{n \rightarrow \infty} \left(A_{n}(x)\right)^{-1} \bullet f\left(\varphi^{n}(x)\right), x \in X.$$
We have $$g(\varphi(x))=a(x)\bullet \lim_{n \rightarrow \infty} \left(A_{n+1}(x)\right)^{-1} \bullet f\left(\varphi^{n+1}(x)\right)=a(x)\bullet g(x), x \in X,$$
hence $g$ is a solution of (\ref{ec_generala}) and
$$ d\left( g(x), \left(A_{m}(x)\right)^{-1}\bullet f\left(\varphi^{m}(x)\right) \right) \leq
\varepsilon\left(\varphi^{m}(x)\right), x \in X, m\geq 1.$$
By $(iii)$ it follows  that $$d\left(\left(A_{n}(x)\right)^{-1} \bullet f\left(\varphi^{n}(x)\right), f(x)\right) \leq \varepsilon(x)+ \delta\left(\varphi^{n}(x)\right)$$
and by letting $n$ go to infty, we obtain
$$d(f(x), g(x)) \leq \varepsilon (x), \forall x\in X.$$

We prove now the uniqueness of $\ g$. To this end, let us consider $h:X \rightarrow G$ a solution of the equation (\ref{ec_generala}), satisfying the relation
    $$d(h(x), f(x)) \leq \varepsilon (x), \forall x\in X. $$ By replacing $x$ by $\varphi^m(x)$, we have
    $$d( h\left(\varphi^{m}(x)\right), f\left(\varphi^{m}(x)\right)) \leq \varepsilon\left(\varphi^{m}(x)\right), \forall x\in X. $$
Having in mind that $h\left(\varphi^{m}(x)\right)=A_m(x) \bullet h(x)$, it follows
$$ d\left( h(x), \left(A_{m}(x)\right)^{-1}\bullet f\left(\varphi^{m}(x)\right) \right) \leq
\varepsilon\left(\varphi^{m}(x)\right), x \in X.$$ Letting $m$ go to infty , we obtain
$$d(h(x), g(x)) =0, \forall x\in X.$$
\end{proof}

As a direct application of the Theorem \ref{second_theorem} we will obtain the following result concerning the characterization of the functions $f:\mathbb{R}^{*}_{+} \rightarrow \mathbb{R}$ that can be approximate with a given error, by the solutions of Digamma functional equation
\begin{equation}g(x+1)=g(x)+\frac{1}{x}, \ x \in \mathbb{R}^{*}_{+}. \label{digamma}\end{equation}
\begin{cor}
The following statements are equivalent:\\
$(i)$ There exists a unique solution $\ g $ of (\ref{digamma}) such that
$$|f(x) - g(x)| \leq \varepsilon (x), \forall x\in   \mathbb{R}^{*}_{+}. $$
$$ \left|f(x+n)- f(x)-\displaystyle{\sum\limits_{k=0}^{n-1}\frac{1}{x+k}} \right| \leq \varepsilon(x)+ \varepsilon(x+n), \ x\in  \mathbb{R}^{*}_{+}, n\geq 1. \leqno(ii) $$
$(iii)$ There exists $$\delta \in \mathcal{E}_{\varphi}:=\left\{\varepsilon: X \rightarrow \mathbb{R}_{+},  \ \lim_{n \rightarrow \infty} \varepsilon\left(x+n\right)=0, \forall x\in \mathbb{R}^{*}_{+}\right\}$$  so that
$$ \left|f(x+n)- f(x)-\displaystyle{\sum\limits_{k=0}^{n-1}\frac{1}{x+k}}
\right| \leq \varepsilon(x)+ \delta(x+n), \ x\in  \mathbb{R}^{*}_{+}, n\geq 1.$$
\end{cor}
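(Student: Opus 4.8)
The plan is to recognize the Corollary as nothing more than the specialization of Theorem \ref{second_theorem} to a concrete instance, and then simply to verify that the abstract hypotheses are met and that the abstract formulas collapse to the stated ones. Concretely, I would take $X=\mathbb{R}^{*}_{+}$, the complete metric group $(G,\bullet,d)=(\mathbb{R},+,d)$ with $d(s,t)=|s-t|$ — whose metric is translation invariant, hence in particular invariant under left translation — together with the data $\varphi:\mathbb{R}^{*}_{+}\to\mathbb{R}^{*}_{+}$, $\varphi(x)=x+1$, and $a:\mathbb{R}^{*}_{+}\to\mathbb{R}$, $a(x)=\tfrac1x$. With these choices the equation $g(\varphi(x))=a(x)\bullet g(x)$ is exactly the Digamma equation (\ref{digamma}), and the set $\mathcal{E}_{\varphi}$ reduces to the one written in statement $(iii)$.

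Next I would compute the relevant iterates. Since $\varphi^{k}(x)=x+k$ for every $k\geq 0$, the definition of $A_{n}$ gives
$$A_{n}(x)=a\!\left(\varphi^{n-1}(x)\right)\bullet\cdots\bullet a(\varphi(x))\bullet a(x)=\sum_{k=0}^{n-1}\frac{1}{x+k},\qquad x\in\mathbb{R}^{*}_{+},\ n\geq 1,$$
because $\bullet$ is ordinary addition; in particular $A_{n}(x)\bullet f(x)=f(x)+\sum_{k=0}^{n-1}\frac{1}{x+k}$ and $f(\varphi^{n}(x))=f(x+n)$. Substituting these identities into conditions $(i)$, $(ii)$, $(iii)$ of Theorem \ref{second_theorem}, and using $d(f(x),g(x))=|f(x)-g(x)|$, turns them verbatim into the three statements of the Corollary.

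Finally, I would invoke Theorem \ref{second_theorem} directly: its three conditions are equivalent, hence so are their concrete reformulations, which proves the Corollary. I do not expect any genuine obstacle here; the only points deserving a line of justification are that $(\mathbb{R},+,d)$ is indeed a complete metric group with left-invariant metric and that the telescoping "product" defining $A_{n}$ collapses to the displayed finite sum $\sum_{k=0}^{n-1}\frac{1}{x+k}$. Accordingly I would present the argument as a brief verification followed by a one-line appeal to Theorem \ref{second_theorem}.
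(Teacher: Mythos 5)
Your proposal is correct and is essentially the same as the paper's proof: the authors likewise obtain the corollary by specializing Theorem \ref{second_theorem} with $X=\mathbb{R}^{*}_{+}$, $(G,\bullet)=(\mathbb{R},+)$, the Euclidean metric, $\varphi(x)=x+1$ and $a(x)=\frac{1}{x}$. Your extra verification that $A_n(x)$ collapses to $\sum_{k=0}^{n-1}\frac{1}{x+k}$ is just a slightly more explicit rendering of the same one-line argument.
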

\begin{proof}
The result follows immediately by taking in Theorem \ref{second_theorem}, $X=\mathbb{R}^{*}_{+}$, $(G, \bullet)=(\mathbb{R}, +)$, $d$ the Euclidean metric on $\mathbb{R}$, $\varphi(x)=x+1$, $\displaystyle a(x)=\frac{1}{x}, x\in \mathbb{R}^{*}_{+}$.
\end{proof}

A more general result, obtained below, is in connection with the recent paper of G.H. Kim and Th. M. Rassias \cite{Kim}. 
\begin{cor} Let $p$ be a positive real number.
The following statements are equivalent:\\
$(i)$ There exists a unique solution $\ g $ of the functional equation $$g(x+p)=g(x)+a(x),~x\in\mathbb{R}^*_+$$ such that
$$|f(x) - g(x)| \leq \varepsilon (x), (\forall)~ x\in   \mathbb{R}^{*}_{+}. $$
$$ \left|f(x+np)- f(x)-\displaystyle{\sum\limits_{k=0}^{n-1}a(x+kp)} \right| \leq \varepsilon(x)+ \varepsilon(x+np), \ x\in  \mathbb{R}^{*}_{+}, n\geq 1. \leqno(ii) $$
$(iii)$ There exists $$\delta \in \mathcal{E}_{\varphi}:=\left\{\varepsilon: X \rightarrow \mathbb{R}_{+},  \ \lim_{n \rightarrow \infty} \varepsilon\left(x+np\right)=0, \forall x\in \mathbb{R}^{*}_{+}\right\}$$  so that
$$ \left|f(x+np)- f(x)-\displaystyle{\sum\limits_{k=0}^{n-1}a(x+kp)}
\right| \leq \varepsilon(x)+ \delta(x+np), \ x\in  \mathbb{R}^{*}_{+}, n\geq 1.$$
\end{cor}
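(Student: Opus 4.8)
The plan is to derive this corollary, exactly as in the Digamma case preceding it, as a direct specialization of Theorem \ref{second_theorem}. First I would fix the data: take $X=\mathbb{R}^*_+$, let $(G,\bullet)=(\mathbb{R},+)$ with $d$ the Euclidean metric, set $\varphi(x)=x+p$, and keep $a:\mathbb{R}^*_+\to\mathbb{R}$ as the given function. Two small checks make Theorem \ref{second_theorem} applicable: since $p>0$, the map $\varphi$ sends $\mathbb{R}^*_+$ into itself; and since $(\mathbb{R},+)$ is abelian with metric coming from $|\cdot|$, the metric is invariant under (left) translations, so $(G,\bullet,d)$ is a complete metric group of the required type.

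Next I would unwind the iterates of $\varphi$ and the products $A_n$. Here $\varphi^{n}(x)=x+np$, hence in additive notation the quantity $A_n(x)=a(\varphi^{n-1}(x))\bullet\cdots\bullet a(\varphi(x))\bullet a(x)$ defined before Theorem \ref{second_theorem} becomes the partial sum $A_n(x)=\sum_{k=0}^{n-1}a(x+kp)$, and therefore $A_n(x)\bullet f(x)=f(x)+\sum_{k=0}^{n-1}a(x+kp)$. With these substitutions the functional equation $g(\varphi(x))=a(x)\bullet g(x)$ is precisely $g(x+p)=g(x)+a(x)$, the class $\mathcal{E}_\varphi$ is exactly $\{\,\delta:\mathbb{R}^*_+\to\mathbb{R}_+:\lim_{n\to\infty}\delta(x+np)=0,\ \forall x\,\}$, and the three conditions $(i)$, $(ii)$, $(iii)$ of Theorem \ref{second_theorem} become, verbatim, the three conditions of the corollary (the estimate in $(ii)$ and $(iii)$ using $\bigl|f(x+np)-f(x)-\sum_{k=0}^{n-1}a(x+kp)\bigr|$ in place of $d(f(\varphi^n(x)),A_n(x)\bullet f(x))$).

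Finally I would invoke the equivalence $(i)\Leftrightarrow(ii)\Leftrightarrow(iii)$ supplied by Theorem \ref{second_theorem} to close the argument. There is essentially no obstacle: the only point deserving a line of verification is the bookkeeping that identifies $A_n(x)$ with the telescoping partial sum $\sum_{k=0}^{n-1}a(x+kp)$ and confirms translation invariance of $|\cdot|$ on $\mathbb{R}$; the preceding corollary is recovered as the special case $p=1$, $a(x)=1/x$, and the link with the paper of G.H. Kim and Th. M. Rassias \cite{Kim} comes from allowing a general step $p$ and a general $a$.
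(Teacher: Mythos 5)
Your proposal is correct and follows exactly the route the paper intends: the corollary is a direct specialization of Theorem \ref{second_theorem} with $X=\mathbb{R}^{*}_{+}$, $(G,\bullet)=(\mathbb{R},+)$, $d$ the Euclidean metric, $\varphi(x)=x+p$ and the given $a$, just as the paper does explicitly for the Digamma case $p=1$, $a(x)=1/x$. Your extra bookkeeping (checking $\varphi(\mathbb{R}^{*}_{+})\subseteq\mathbb{R}^{*}_{+}$, translation invariance of $|\cdot|$, and the identification $A_n(x)=\sum_{k=0}^{n-1}a(x+kp)$, $\varphi^{n}(x)=x+np$) is exactly the verification the paper leaves to the reader.
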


\begin{flushright}
L. C\u adariu, L. Manolescu\\
\textit{
{ \normalsize Department of Mathematics, Politehnica University of Timi\c{s}oara, }\\
{ \normalsize Pia\c{t}a Victoriei no.2, 300006 Timi\c{s}oara, Rom\^{a}nia
}}\\

\hspace{5mm}{ \normalsize \textbf{E-mail}: liviu.cadariu-brailoiu@upt.ro\\ laura.manolescu@upt.ro}
\end{flushright}

\end{document}